\newtheorem{theorem}{Theorem}[section] 
\newtheorem{lemma}[theorem]{Lemma} 
\newtheorem{assumption}[theorem]{Assumption}
\newtheorem{example}[theorem]{Example} 
\newcommand{\R}{\mathbb{R}}
\newcommand{\cL}{\mathcal{L}}
\newcommand{\ao}[1]{{#1}}
\newcommand{\cB}{{\cal B}}
\theoremstyle{definition} 
\DeclareMathOperator{\E}{\mathbb{E}} 
\newcommand{\T}{\mathsf{T}} 
\newcommand{\cS}{\mathcal{S}} 
\begin{document}

\title{Interpretable Gradient Descent for Kalman Gain}
\author{M.A.~Belabbas\thanks{University of Illinois at Urbana-Champaign, {\tt belabbas@illinois.edu}.} $~$and A.~Olshevsky\thanks{Boston University, {\tt alexols@bu.edu}.\\ Authors names are ordered alphabetically.}}
\date{\today}
\maketitle

\begin{abstract}
We derive a decomposition for the gradient of the innovation loss with respect to the filter gain in a linear time-invariant system, decomposing as a product of an observability Gramian and a term quantifying the ``non-orthogonality" between the  estimation error and the innovation.  We leverage this decomposition to give a convergence proof of gradient descent to the optimal Kalman gain,   specifically identifying how recovery of the Kalman gain depends on a non-standard observability condition, and obtaining an interpretable geometric convergence rate. 
\end{abstract}

\section{Introduction}

While the optimal Kalman gain for a linear time–invariant system can be obtained in closed form from the algebraic Riccati equation, gradient-based strategies to compute the Kalman gain remain interesting for two main reasons.  First, in time-varying or adaptive filtering environments, incremental gradient updates permit the gain to be revised continuously, so the filter can respond to evolving system dynamics without repeatedly solving a full Riccati equation.  Second, when the system model is only partially known—or entirely unknown—but measurement data continue to arrive, stochastic gradient methods may be used to directly to identify the gain in a data-driven manner.

 We adopt the expected mean–square magnitude of the innovation —i.e., the one-step-ahead prediction error—as our cost for gradient descent, a quantity we refer to as the \emph{innovations loss}. This is the most direct way to measure filter performance and, since it relies solely on observable data, so it remains well defined even when the true state is unknown. 

 The problem of analyzing gradient descent on the innovations loss has, to our knowledge, only been previously considered in \cite{talebi2023data}. A Kalman filter in one-step lookeahead form was considered in \cite{talebi2023data}, and it was shown that, under observability of the underlying system, gradient descent and its stochastic variants converge to the  Kalman gain. A closely related paper \cite{li2023policy} considered a slightly different cost function, using look-ahead in the innovations, and also obtained convergence to the Kalman gain.

 There is also considerable parallel literature on the related problem of computing the optimal gain for LQR using gradient descent (e.g., \cite{fazel2018global, fatkhullin2021optimizing, mohammadi2020linear,talebi2024policy}). However, as explained in \cite{talebi2023data}, although some form of duality between controllability and observability holds here, one generally cannot derive results for computing the Kalman gain from corresponding results for LQR due because key nonsingularity assumptions are not preserved in the  transformation. 

In this paper we consider gradient descent to recover the Kalman filter in the more standard formulation. Unlike in \cite{talebi2023data}, we show that if we assume only observability of the underlying system, spurious stationary points do exist. Our starting point for analysis is a new observation that the gradient of the innovation loss is a product of an observability Gramian and a term measuring the violation of the orthogonality principle for Kalman filters. Building on this decomposition, we give a slightly non-standard observability condition that rules out stationary points and ensures convergence of gradient descent at a geometric rate to the Kalman gain. Finally, we show that this rate is interpretable, being a product of two terms: a measure of  how close the trajectory can come to losing observability  and a measure of how steeply violation of the orthogonality principle forces the innovations loss to rise.





\section{Problem Statement\label{sec:setting}}

We consider a discrete-time linear time-invariant system described by the state-space equations
\begin{align}
x_{t+1} &= A\,x_t + w_t, \label{eq:state_dyn}\\
y_{t+1} &= C\,x_{t+1} + v_{t+1}, \label{eq:meas_dyn}
\end{align}
where $x_t \in \mathbb{R}^n$ is the state vector, $y_t \in \mathbb{R}^p$ is the measurement vector. We assume the process noise $w_t$ and measurement noise $v_t$ are  independent, identically distributed zero-mean random vectors with covariances $\E[w_t w_t^\T] = Q_w \succ 0$ and $\E[v_t v_t^\T] = R_v \succ 0$, respectively.

We employ a linear filter to estimate the state from observations:
\begin{align}
\hat x_{t+1}^- &= A\,\hat x_t, \label{eq:pred_state}\\
\delta_{t+1} &= y_{t+1} - C\,\hat x_{t+1}^-, \label{eq:innov}\\
\hat x_{t+1} &= \hat x_{t+1}^- + L\,\delta_{t+1}. \label{eq:update_state}
\end{align}
Here $\hat x_{t+1}$ is the estimate of $x_{t+1}$, $\delta_{t+1}$ is the innovation  and $L \in \mathbb{R}^{n \times p}$ is the filter gain matrix. If $L$ is chosen so as to minimize the expected mean square state estimation error in steady-state, the resulting filter is the Kalman filter, and we denote the corresponding optimal gain as $L_{\rm KF}$. 

However, the performance measure we consider here is  $E \|\delta_{t}\|_2^2$ in steady-state as a function of a general gain $L$. Specifically, let 

$$F(L):=(I-LC)A$$ and define the set
$$ \cS:=\{ L \mid  \rho(F(L))<1 \},$$ where $\rho(\cdot)$ is the spectral radius, be the set of stabilizing gains for the pair $(A,C)$. We define 
the steady-state innovation covariance as
\[ \Sigma_\delta(L) = \lim_{t\to\infty} \E[\delta_{t+1} \delta_{t+1}^\T]. \] 
It is known that this limit exists if $L \in \cS$.  Our error metric will be the function

$$ J_{\rm innov}: \cS \to \R  = \Tr( \Sigma_{\delta}(L)). $$

Our goal in this work is to answer the following three related questions:
\begin{enumerate} 
\item Under what condition does the gradient descent dynamics \[ \frac{d}{dt} L = - \nabla_{L} J_{\rm innov}(L),\] converge to the Kalman gain, i.e.,  when does it hold that 
\[ \lim_{t \to \infty} L(t) = L_{KF}\]

\item Is there an explicit expression for $\nabla_{L} J_{\rm innov}(L)$ which provides an intuitive explanation for the absence of spurious stationary points and the convergence of gradient descent above?

\item Can we answer the previous two questions in terms of easily interpretable properties of the dynamics of~\eqref{eq:state_dyn} and~\eqref{eq:meas_dyn}?
\end{enumerate} 


\section{The  Gradient of the Innovations Loss} \label{sec:gradient_derivation}

We now introduce some light notation that will allow us to state our first result. As is standard, the state estimation error is denoted by
 \[ e_t := x_t - \hat x_t, \] 
  and the steady-state cross-covariance between the  estimation error $e_{t}$ and the innovation $\delta_{t}$ by 
\[ K(L) := \lim_{t\to\infty} \E[e_{t} \delta_{t}^\T], \]
where the dependence on $L$ is through the definitions of the estimation error and innovation. 
It is well-known that 
\[ K(L_{KF}) = 0,\] where, recall, $L_{KF}$ is the Kalman gain. This is sometimes called {\em the orthogonality principle} for Kalman filtering. Intuitively, we can think of the size of $K(L)$ as a measure of violation of this principle.

Additionally, we denote by $W_o(L)$   observability Gramian of the system $(F(L), CA)$. Formally,  provided $\rho(F(L)) < 1$, the quantity $W_o(L)$ is the unique positive semidefinite solution to the Lyapunov equation:
\begin{equation} \label{eq:Wo_lyap_def}
W_o(L) = F(L)^\T W_o(L) F(L) + A^\T C^\T C A.
\end{equation} 
With these definitions in place, we can state our first main result.

\begin{theorem} \label{thm:grad1_main}
The gradient of $J_{\rm innov}(L)$ with respect to the standard Euclidean inner product is given by
\[ \nabla_{L} J_{\rm innov}(L) = - 2 W_o(L) K(L). \]
\end{theorem}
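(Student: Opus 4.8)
The plan is to reduce the computation to differentiating the trace of a matrix that solves a discrete Lyapunov equation, and then to recognize the two factors in the answer as an observability Gramian and the cross-covariance $K(L)$.

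First I would derive the closed-loop error and innovation recursions. Substituting \eqref{eq:pred_state}--\eqref{eq:update_state} into the definition $e_t = x_t - \hat x_t$ and using \eqref{eq:state_dyn}--\eqref{eq:meas_dyn} yields
\[
e_{t+1} = F(L)\,e_t + (I-LC)\,w_t - L\,v_{t+1}, \qquad \delta_{t+1} = CA\,e_t + C\,w_t + v_{t+1},
\]
and $e_t$ is independent of $(w_t,v_{t+1})$ since it depends only on noises strictly before time $t+1$. Writing $P(L) := \lim_{t\to\infty}\E[e_t e_t^\T]$ for the steady-state error covariance and dropping the vanishing cross terms, $\Sigma_\delta(L) = CA\,P(L)\,A^\T C^\T + C Q_w C^\T + R_v$, so that $J_{\rm innov}(L) = \Tr\big(A^\T C^\T C A\, P(L)\big) + \mathrm{const}$, with the constant independent of $L$. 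Hence $\nabla_L J_{\rm innov}$ depends on $L$ only through $P(L)$.

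Second, I would record that $P(L)$ is the unique solution of $P = F(L)\,P\,F(L)^\T + G(L)$ with $G(L) = (I-LC)Q_w(I-LC)^\T + L R_v L^\T$, and differentiate $\Tr(M\,P(L))$, where $M := A^\T C^\T C A$, in a direction $dL$. Perturbing the Lyapunov equation gives a linear equation $dP = F\,dP\,F^\T + N$, where $N$ collects the first-order variations of $F P F^\T$ and of $G$. Solving by Neumann series, $\Tr(M\,dP) = \Tr(W\,N)$ with $W = \sum_{k\ge 0}(F^\T)^k M F^k$. The key structural observation is that $W$ is exactly the solution of $W = F^\T W F + A^\T C^\T C A$, i.e. $W = W_o(L)$ from \eqref{eq:Wo_lyap_def}. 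This is the step where the Gramian appears and is, to my mind, the main conceptual content of the proof.

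Finally I would expand $N$ using $dF = -(dL)CA$ and the variation of $G$, substitute into $\Tr(W_o N)$, and collect the result as $\Tr(\,\cdot\,dL)$ to read off the Euclidean gradient. The symmetry of $W_o$, $P$, and $N$ makes the $dL$ and $(dL)^\T$ contributions coincide, supplying the factor of $2$, and after cycling the trace one obtains
\[
\nabla_L J_{\rm innov}(L) = -2\,W_o(L)\big[\,F(L)\,P\,A^\T C^\T + (I-LC)\,Q_w\, C^\T - L R_v\,\big].
\]
The last step is to identify the bracketed matrix. Computing the steady-state cross-covariance directly from the recursions above and again using independence of $e_t$, $w_t$, $v_{t+1}$ gives $\E[e_{t+1}\delta_{t+1}^\T] = F(L)\,P\,A^\T C^\T + (I-LC)\,Q_w\, C^\T - L R_v$, whose limit is precisely $K(L)$, yielding $\nabla_L J_{\rm innov}(L) = -2\,W_o(L)\,K(L)$. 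The only real obstacle is the bookkeeping in differentiating through the Lyapunov equation and tracking the transpose/symmetry structure so that the forcing collapses to $\Tr(W_o N)$ with the correct factor of two; the covariance identification at the end is then a short direct calculation.
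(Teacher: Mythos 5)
Your proposal is correct and takes essentially the same route as the paper: differentiate $\Tr(M\,P(L))$ through the Lyapunov equation for $P$, transfer the perturbation onto the adjoint Gramian (your Neumann-series solution $\Tr(M\,dP)=\Tr(W_o N)$ is just the series form of the paper's trace manipulation using $W_o - F^\T W_o F = M$), and identify the bracketed factor with $K(L)$ by the same direct steady-state covariance computation. No gaps.
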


This expression provides an interpretation of the meaning of this gradient: it is a product of two terms, the first of which measures the observability of the underlying system and the second of which measures a violation of the orthogonality principle. This perspective will prove particularly valuable in the subsequent section, where we analyze gradient descent to compute the Kalman gain. What is specifically novel about this expression relative to the previous literature (e.g.,~\cite[Section D.3]{talebi2023data}) is the clean form and interpretability of the result.




\subsection{Proof of Theorem \ref{thm:grad1_main}}

Our first step is to write down an expression for $J_{\rm innov}(L)$ and a number of related quantities which will be useful later. All of these are standard in the Kalman filtering literature, but we state them here to be self-contained and standardize notation. 

We begin by writing down the dynamics of the error. A straightforward calculation shows that: 
\[ e_{t+1} = (I - LC)A e_t + (I - LC)w_t - L v_{t+1}. \]
We write this as
\[ e_{t+1} = F(L) e_{t} + \eta_t(L), \]
where we introduced the {\em effective noise} $\eta_t(L)$, defined as 
\begin{equation} \label{eq:eff_noise_def}
\eta_t(L) := (I - LC)w_t - L v_{t+1}.
\end{equation}
The covariance  $Q_\eta(L) := \E[\eta_t(L) \eta_t(L)^\T]$ of the effective noise is given by
\begin{equation} \label{eq:Q_eta_def}
Q_\eta(L) = (I-LC)\,Q_w\,(I-LC)^\T + L\,R_v\,L^\T.
\end{equation}
We show later in Lemma \ref{rem:Qeta_posdef} that $Q_{\eta}(L)$ is positive definite.

If the filter is stable, i.e., if $L \in \cS$,  the steady-state error covariance $$P(L) := \lim_{t\to\infty}\E[e_t e_t^\T],$$ exists and is the unique, symmetric positive semidefinite solution to the discrete algebraic Lyapunov equation
\begin{equation}\label{eq:P_lyap}
P(L) = F(L)\,P(L)\,F(L)^\T + Q_\eta(L).
\end{equation}
If, moreover, $Q_\eta(L) \succ 0$ then  $P(L)$ is also positive definite.

\bigskip 

A related quantity is the {\em a priori estimation error} defined as  $$e_{t+1}^- = x_{t+1} - \hat{x}_{t+1}^-.$$ It is related to the estimation error by $$e_{t+1}^- = A e_t + w_t,$$ and to the innovation by \begin{equation} \label{eq:delta} \delta_{t+1} = C e_{t+1}^- + v_{t+1}.
\end{equation} 
From the above relation for $e_{t+1}^-$, we easily obtain that the steady-state covariance of the a priori estimation error is given by
\begin{equation} \label{eq:pminus} P^-(L):= \lim_{t\to\infty}\E[e_t^- e_t^{-\T}] = A P(L) A^\T + Q_w. \end{equation} 
From~\eqref{eq:delta} and~\eqref{eq:pminus}, the steady-state innovation covariance is then
\[ \Sigma_\delta(L) = \lim_{t\to\infty} \E[\delta_{t} \delta_{t}^\T] = C P^-(L) C^\T + R_v = C (A P(L) A^\T + Q_w) C^\T + R_v. \]

We define the cost function $J_{\rm innov}(L)$ as the trace of this matrix:
\begin{equation} \label{eq:jinnov_def}
J_{\rm innov}(L) = \Tr(C A P(L) A^\T C^\T) + \Tr(C Q_w C^\T) + \Tr(R_v).
\end{equation}

We next require several intermediate lemmas before proceeding. Our first lemma establishes the positive definiteness of the effective noise covariance.

\begin{lemma}\label{rem:Qeta_posdef}
For any gain matrix $L$, we have that  $Q_\eta(L)$ is positive definite. 
\end{lemma}
\begin{proof} 
Observe that
\[ z^\T Q_\eta(L) z = z^\T (I-LC)Q_w(I-LC)^\T z + z^\T L R_v L^\T z = \|(I-LC)^\T z\|_{Q_w}^2 + \|L^\T z\|_{R_v}^2, \]
where $\|x\|_M^2 = x^\T M x$. This vanishes when $L^\T z=0$ and $z-C^\T L^\T  z=0$. This is possible only if $z=0$, which proves the claim.
\end{proof}
Next, we derive an expression for the cross-covariance $K(L)$.

\begin{lemma} \label{lem:covlemma_main}
For $L \in \cS$, it holds that 
\[ K(L) = F(L) P(L) A^\T C^\T + (I-LC) Q_{w} C^\T - L R_{v}. \]
\end{lemma}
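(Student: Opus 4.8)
The plan is to compute $K(L) = \lim_{t\to\infty}\E[e_t \delta_t^\T]$ by writing both factors in terms of the error at the previous time step together with the fresh noises $w_{t-1}$ and $v_t$, and then to evaluate the resulting product using the whiteness and mutual independence of the noises. The first ingredient is the error recursion already derived in the excerpt, $e_t = F(L) e_{t-1} + \eta_{t-1}(L)$ with $\eta_{t-1}(L) = (I-LC)w_{t-1} - L v_t$. The second ingredient is an expression for the innovation at the same time indices: combining $\delta_t = C e_t^- + v_t$ from \eqref{eq:delta} with $e_t^- = A e_{t-1} + w_{t-1}$ gives $\delta_t = CA\, e_{t-1} + C w_{t-1} + v_t$.

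Substituting both expressions into $\E[e_t \delta_t^\T]$ and expanding produces a sum of products of $\{F(L) e_{t-1},\, \eta_{t-1}\}$ against $\{CA\, e_{t-1},\, C w_{t-1},\, v_t\}^\T$. The heart of the argument is to identify which terms vanish. Because $e_{t-1}$ is a function of the noises strictly before the fresh ones — it depends on $w_s$ for $s \le t-2$ and $v_s$ for $s \le t-1$ — it is uncorrelated with both $w_{t-1}$ and $v_t$, so $F(L)\,\E[e_{t-1}(Cw_{t-1}+v_t)^\T] = 0$. Conversely, $\eta_{t-1}$ is built only from $w_{t-1}$ and $v_t$, so $\E[\eta_{t-1} e_{t-1}^\T] = 0$ and the cross term against $CA\, e_{t-1}$ also drops out.

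The surviving contributions are exactly three. The term $F(L)\,\E[e_{t-1}e_{t-1}^\T]\, A^\T C^\T$ converges, as $t\to\infty$, to $F(L)P(L)A^\T C^\T$, where existence of the steady-state covariance $P(L)$ from \eqref{eq:P_lyap} is guaranteed by $L\in\cS$. The remaining two come from $\E[\eta_{t-1}(Cw_{t-1}+v_t)^\T]$: using $\E[w_{t-1}w_{t-1}^\T] = Q_w$, $\E[v_t v_t^\T] = R_v$, and $\E[w_{t-1}v_t^\T] = 0$, this equals $(I-LC)Q_w C^\T - L R_v$. Summing the three pieces yields the claimed identity. The only real subtlety — and the step I would state most carefully — is the independence bookkeeping that justifies discarding the cross terms between $e_{t-1}$ and the fresh noises $(w_{t-1}, v_t)$; once the filtration structure of $e_{t-1}$ relative to these noises is pinned down, the rest is a direct computation.
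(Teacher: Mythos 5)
Your proof is correct and takes essentially the same route as the paper: a direct steady-state computation of $\E[e_t\delta_t^\T]$ in which cross terms vanish by orthogonality of the error to the fresh noises. The only cosmetic difference is that you unroll one step further, expanding everything in terms of $e_{t-1}$, $w_{t-1}$, $v_t$, whereas the paper packages $Ae_t + w_t$ into the a priori error $e_{t+1}^-$ and its covariance $P^-(L)$ from~\eqref{eq:pminus} before substituting --- the surviving terms and final algebra are identical.
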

\begin{proof} The update equation $e_{t+1} = e_{t+1}^- - L \delta_{t+1}$ and~\eqref{eq:delta} together yield 
\[ e_{t+1} = e_{t+1}^- - L(C e_{t+1}^- + v_{t+1}) = (I-LC) e_{t+1}^- - L v_{t+1}. \]
Now, we compute the cross-covariance $K(L) = \E[ e_{t+1} \delta_{t+1}^\T ]$:
\begin{align*}
K(L) &= \E[ ((I-LC) e_{t+1}^- - L v_{t+1}) (C e_{t+1}^- + v_{t+1})^\T ] \\
&= (I-LC) \E[e_{t+1}^- (e_{t+1}^-)^\T] C^\T - L \E[v_{t+1} v_{t+1}^\T] \\ 
&= (I-LC) P^-(L) C^\T - L R_v \\
&= (I-LC) A P(L) A^\T C^\T + (I-LC) Q_w C^\T - L R_v,
\end{align*} where the last step used~\eqref{eq:pminus}. Recalling the definition that $F(L)=(I-LC)A$ concludes the proof. 
\end{proof}

The following lemma provides the core of the gradient calculation. We omit writing the dependence of $F(L), P(L), Q_\eta(L)$ and$ W_o(L)$ on $L$ to save space.
Recall that the gradient of a function $J:\R^{p\times q} \to \R$ with respect to the Frobenius inner product is defined as the vector field $\nabla J$ satisfying
$$
\Tr(\nabla J(L)^\T V) = dJ(L) \cdot V \mbox{ for all } V \in \R^{n \times p},
$$
where $dJ(L) \cdot V$ is the differential of $J$ at $L$ evaluated in the direction $V \in \R^{p \times q}$.
\begin{lemma} \label{lem:gradlemma_main} 
For $L \in \cS$, we have
\[ \nabla_L J_{\rm innov}(L) \;=\; -2 W_o ( F P A^\T C^\T + (I-LC) Q_w C^\T - L R_v ). \]
\end{lemma}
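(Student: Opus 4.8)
The plan is to reduce the whole computation to differentiating the steady-state error covariance $P$, since by~\eqref{eq:jinnov_def} only the term $\Tr(CAP A^\T C^\T)$ depends on $L$ while $\Tr(CQ_wC^\T)+\Tr(R_v)$ is constant. Writing $M := A^\T C^\T C A$ and using cyclicity of the trace, the differential of the cost in a direction $V$ becomes $dJ_{\rm innov}(L)\cdot V = \Tr(M\,\dot P)$, where $\dot P := dP\cdot V$ is the directional derivative of $P$. Because $P$ solves the Lyapunov equation~\eqref{eq:P_lyap} and $\rho(F)<1$ makes the Lyapunov solution map smooth on $\cS$, the map $L\mapsto P$ is differentiable and $\dot P$ is pinned down by implicitly differentiating~\eqref{eq:P_lyap}.

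Next I would carry out that implicit differentiation. With $\dot F := dF\cdot V = -VCA$ and $\dot Q_\eta := dQ_\eta\cdot V$ obtained by differentiating~\eqref{eq:Q_eta_def}, differentiating~\eqref{eq:P_lyap} yields a second discrete Lyapunov equation for $\dot P$,
\[ \dot P = F\,\dot P\,F^\T + G, \qquad G := \dot F\,P\,F^\T + F\,P\,\dot F^\T + \dot Q_\eta. \]
The decisive step — and the reason the observability Gramian appears at all — is an \emph{adjoint} argument that eliminates the implicitly defined $\dot P$ in favor of $W_o$. Taking $\Tr(W_o\,\cdot\,)$ of the $\dot P$-equation and using cyclicity to move $F,F^\T$ onto $W_o$ gives
\[ \Tr(W_o\,\dot P) = \Tr(F^\T W_o F\,\dot P) + \Tr(W_o G), \]
so that $\Tr\big((W_o - F^\T W_o F)\dot P\big)=\Tr(W_o G)$. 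Invoking the Gramian's own Lyapunov equation~\eqref{eq:Wo_lyap_def}, which says exactly $W_o - F^\T W_o F = A^\T C^\T C A = M$, this collapses to $\Tr(M\dot P)=\Tr(W_o G)$. Hence $dJ_{\rm innov}\cdot V = \Tr(W_o G)$, with all dependence on $\dot P$ removed.

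It then remains to expand $\Tr(W_o G)$ and read off the gradient. Substituting $G$ together with the expressions for $\dot F$ and $\dot Q_\eta$ produces six trace terms, each linear in $V$ or in $V^\T$. Using transpose-invariance of the trace (and the symmetry $W_o^\T=W_o$) to convert the $V^\T$ terms, and cyclicity to push $V$ to the right, every term takes the form $\Tr(\Phi V)$; matching against the defining relation $\Tr(\nabla J^\T V)=dJ\cdot V$ then gives $\nabla_L J_{\rm innov}=\Phi^\T$. The terms pair up cleanly: the two coming from $\dot F\,P\,F^\T$ and $F\,P\,\dot F^\T$ combine into $-2W_o FPA^\T C^\T$, the two $Q_w$-terms of $\dot Q_\eta$ into $-2W_o(I-LC)Q_wC^\T$, and the two $R_v$-terms into $+2W_o L R_v$, which is exactly the claimed formula. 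I expect the only genuine obstacle to be the adjoint identity of the second paragraph; the final expansion is routine trace bookkeeping, whose one subtlety is transposing the $V^\T$ contributions correctly.
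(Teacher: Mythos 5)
Your proposal is correct and takes essentially the same route as the paper's proof: the same adjoint trick of pairing the differentiated Lyapunov equation for $P$ with the Gramian identity $W_o - F^\T W_o F = A^\T C^\T C A$ to eliminate the implicit $dP$, followed by the same expansion of $dF$ and $dQ_\eta$ and trace bookkeeping, with all signs and pairings matching the stated formula. The only difference is cosmetic --- you work with directional derivatives $\dot P = dP\cdot V$ rather than the paper's matrix differentials, and you explicitly justify smoothness of $L \mapsto P(L)$ on $\cS$, which the paper leaves implicit.
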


\begin{proof}
Throughout this proof we work in the framework of \emph{matrix differentials}.  
For a smooth matrix-valued mapping $F:\mathbb{R}^{m\times n}\to\mathbb{R}^{p\times q}$ we write with some abuse of notation 
$dF(X)$ for the differential of $F$ evaluated at $X$ and applied to an arbitrary vector $dX$; 
namely, $dF(X)$ is so that   $F(X+dX)=F(X)+dF(X)+ o(\|dX\|)$.

From~\eqref{eq:jinnov_def}, $J_{\rm innov}(L) = \Tr(C A P A^\T C^\T) + \text{const}$. Thus, the differential $dJ_{\rm innov}$ is
\begin{equation} \label{eq:dJ_dp_main}
dJ_{\rm innov}  = \Tr(C A (dP) A^\T C^\T) = \Tr(A^\T C^\T C A \, dP) = \Tr(M dP),
\end{equation}
where $M = A^\T C^\T C A$. Now  the observability  Gramian $W_o$ obeys the Lyapunov equation
\begin{equation}\label{eq:adjoint_lyap_main}
W_o - F^\T W_o F = M.
\end{equation} With this identity in place, let us compute an expression for $dP$ which we will then plug into~\eqref{eq:dJ_dp_main}. Differentiating~\eqref{eq:P_lyap} we obtain
\begin{equation}\label{eq:diff_lyap_main}
dP - F (dP) F^\T = (dF) P F^\T + F P (dF)^\T + dQ_\eta.
\end{equation}
We multiply \eqref{eq:diff_lyap_main} by $W_o$ and take the trace:
\[
\Tr(W_o (dP - F (dP) F^\T)) = \Tr(W_o ((dF) P F^\T + F P (dF)^\T + dQ_\eta)).
\]
The left-hand side (LHS) simplifies using the cyclic property of trace and~\eqref{eq:adjoint_lyap_main} to
\[
\text{LHS} = \Tr(W_o dP) - \Tr(W_o F (dP) F^\T) = \Tr(W_o dP) - \Tr(F^\T W_o F dP)   = \Tr(M dP).
\] and thus $\Tr(M dP) = \Tr(W_o ((dF) P F^\T + F P (dF)^\T + dQ_\eta))$. Plugging this into~\eqref{eq:dJ_dp_main}, we  obtain
\begin{equation}\label{eq:dJ_final_form_main}
dJ_{\rm innov} = \Tr\bigl(W_o ((dF) P F^\T + F P (dF)^\T + dQ_\eta)\bigr).
\end{equation}

We next evaluate $dQ_\eta$ and $dF$.
For the latter, using $F=(I-LC)A$, we obtain  $dF = -(dL) C A$.
From $Q_\eta = (I-LC)\,Q_w\,(I-LC)^\T + L\,R_v\,L^\T$, we have
\[
dQ_\eta = -(dL) C Q_w (I-LC)^\T - (I-LC) Q_w C^\T (dL)^\T + (dL) R_v L^\T + L R_v (dL)^\T.
\]
Plugging these into~\eqref{eq:dJ_final_form_main}, we get

\begin{multline*}
dJ_{\rm innov} = \Tr\Biggl(W_o \Biggl( (-(dL) CA) P F^\T + F P (-(dL) C A)^\T 
-(dL) C Q_w (I-LC)^\T \\ - (I-LC) Q_w C^\T (dL)^\T 
+ (dL) R_v L^\T + L R_v (dL)^\T \Biggr) \Biggr).
\end{multline*}

We now use standard trace identities
as well as the symmetry of $W_o,P, R_v$ to move $dL$ to the right   and obtain after a short calculation
\[ dJ_{\rm innov} = \Tr \Bigl( \bigl( -2 C A P F^\T W_o -2 C Q_w (I-LC)^\T W_o + 2 R_v L^\T W_o \bigr) dL \Bigr). \]

Since $dL\cdot V = V$, we have shown that
$$d J_{\rm innov} \cdot V  = \Tr  \Bigl( \bigl( -2 C A P F^\T W_o -2 C Q_w (I-LC)^\T W_o + 2 R_v L^\T W_o \bigr) V \Bigr),$$ from which we conclude that
\begin{align*}
\nabla_L J_{\rm innov} &= \left[ -2 C A P F^\T W_o -2 C Q_w (I-LC)^\T W_o + 2 R_v L^\T W_o \right]^\T \\
&= -2 W_o ( F P A^\T C^\T + (I-LC) Q_w C^\T - L R_v ).
\end{align*}
This concludes the proof.
\end{proof}

\begin{proof}[Proof of Theorem \ref{thm:grad1_main}]
The result follows directly by comparing the expression for the gradient from Lemma \ref{lem:gradlemma_main}
with the expression for the cross-covariance $K(L)$ from Lemma \ref{lem:covlemma_main}.
\end{proof}

\section{Convergence of Gradient Descent}

We next focus our attention to the convergence of the gradient descent flow of $J_{\rm innov}$, which is the flow associated with the differential equation
\begin{equation} \label{eq:grad_flow_main}
 \frac{d}{dt} L(t) = - \nabla  J_{\rm innov}(L(t)) = 2 W_o(L(t)) K(L(t)).
\end{equation}

We make the following assumption.

\begin{assumption} \label{assum:obs_main}
The pair $(A, CA)$ is observable.
\end{assumption}

This assumption might look unnatural at first glance, given that the Kalman filter theory is usually developed using only the much weaker assumption that $(A,C)$ is detectable. However, it should make sense given the  second equality of~\eqref{eq:grad_flow_main}: if we want to rule out all stationary points except $L_{KF}$, we do want to make sure $W_{o}$ is not singular, at least at the initial point. 

We also need the following standard assumption to ensure existence and uniqueness of the Kalman filter. 

\begin{assumption} \label{assum:stabilizable_main}
The pair $(A, Q_w^{1/2})$ is stabilizable.
\end{assumption}

With these assumptions in place, we will prove the following theorem, which is our second main result. We show that the gradient flow of $J_{\rm innov}$ converges to the Kalman gain when initialized at an arbitrary $L(0)\in \cS$; said otherwise, gradient descent recovers the Kalman gain. 

To state the result precisely, we introduce the following level set: given $L(0) \in \cS$, we define
$$\mathcal{L}_0 =  \{ L   \in \cS \mid  J(L) \leq J(L(0)) \}.$$

We furthermore introduce the constants
\[ \kappa = \inf_{ L \in \mathcal{S}_0 } \lambda_{\min} \left( W_o(L) \right),\] and
\[ c = \sup_{ \{ L \in \mathcal{L}_0 \setminus \{ L_{KF} \} \}} \frac{J_{\rm innov}(L) - J_{\rm innov}(L_{KF})}{\|K(L)\|_F^2}.\]

We can now state the main result of this section.
\begin{theorem}[Convergence of Gradient Descent] Assume that  Assumptions  \ref{assum:obs_main} and \ref{assum:stabilizable_main}\label{thm:convergence_main} hold. Let $L(t)$ be the solution to~\eqref{eq:grad_flow_main}.  Then  $$\lim_{t \to \infty} L(t) = L_{KF}.$$
Moreover, it holds that 
\begin{enumerate} 
\item $\kappa > 0$ and $  c < \infty$
\item  The cost satisfies
\[ J_{\rm innov}(L(t)) - J_{\rm innov}(L_{KF}) \leq e^{-4 (\kappa^2/c)t} \left( J_{\rm innov}(L(0)) - J_{\rm innov}(L_{KF})\right).\]
\end{enumerate}
\end{theorem}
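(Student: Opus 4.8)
The plan is to establish a Polyak--{\L}ojasiewicz-type inequality along the flow and integrate it with Gr\"onwall's lemma, then deduce convergence of the trajectory itself from compactness. The backbone is the decomposition $\nabla J_{\rm innov}(L) = -2W_o(L)K(L)$ of Theorem~\ref{thm:grad1_main}: it turns the flow~\eqref{eq:grad_flow_main} into $\dot L = 2W_oK$ and, crucially, identifies the stationary points with the zero set of $K$ whenever $W_o$ is nonsingular. Differentiating the cost along the flow gives the dissipation identity $\frac{d}{dt}J_{\rm innov}(L(t)) = \Tr(\nabla J_{\rm innov}^\T\dot L) = -\|\nabla J_{\rm innov}\|_F^2 = -4\|W_oK\|_F^2 \le 0$, so $J_{\rm innov}$ is nonincreasing and the entire trajectory remains in the level set $\mathcal{L}_0$. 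Everything then reduces to bounding $\|W_oK\|_F^2$ from below by a multiple of the cost gap, which is precisely what the constants $\kappa$ and $c$ deliver.

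I would first pin down the geometry of $\mathcal{L}_0$ and the constant $\kappa$. Under Assumption~\ref{assum:obs_main}, observability of $(A,CA)$ is invariant under the output injection $A \mapsto A - L(CA) = F(L)$, so $(F(L),CA)$ is observable for every $L$; since $F(L)$ is stable on $\cS$, this forces $W_o(L)\succ 0$ pointwise on $\cS$. To upgrade pointwise positivity to the uniform bound $\kappa>0$, I would show $\mathcal{L}_0$ is compact: as $L\to\partial\cS$ one has $\rho(F(L))\to 1$ and $P(L)$ (hence $J_{\rm innov}$) blows up, keeping $\mathcal{L}_0$ bounded away from the stability boundary, while the $LR_vL^\T$ term in $Q_\eta$ from~\eqref{eq:Q_eta_def} forces coercivity as $\|L\|\to\infty$. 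Continuity of $L\mapsto\lambda_{\min}(W_o(L))$ on this compact, boundary-avoiding set then makes the infimum positive, giving $\kappa>0$. In parallel I would characterize $L_{KF}$ as the unique stationary point: setting $K(L)=0$ in Lemma~\ref{lem:covlemma_main} and solving yields the fixed-point relation $L = P^-(L)C^\T(CP^-(L)C^\T + R_v)^{-1}$, i.e.\ the Kalman-gain form, and Assumption~\ref{assum:stabilizable_main} guarantees the associated Riccati equation has a unique stabilizing solution, so $K(L)=0\iff L=L_{KF}$ on $\cS$. Because $W_o\succ 0$, the only zero of $\nabla J_{\rm innov}$ is $L_{KF}$, which by coercivity is the unique global minimizer.

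The finiteness $c<\infty$ is the main obstacle, since $c$ is the genuine PL constant and the ratio $\phi(L):=(J_{\rm innov}(L)-J_{\rm innov}(L_{KF}))/\|K(L)\|_F^2$ is of indeterminate $0/0$ type at $L_{KF}$. Away from $L_{KF}$ the ratio is continuous and finite on compact $\mathcal{L}_0$, the denominator being strictly positive by the uniqueness just established. The hard part will be the limit at $L_{KF}$: a second-order expansion shows the numerator is $O(\|L-L_{KF}\|^2)$ governed by the Hessian, while $K(L_{KF})=0$ makes the denominator $\|dK(L_{KF})[L-L_{KF}]\|_F^2 + o(\|L-L_{KF}\|^2)$, so $\phi$ has a finite limit provided $dK(L_{KF})$ is injective. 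Since $\nabla J_{\rm innov} = -2W_oK$ and $K(L_{KF})=0$, the Hessian satisfies $\nabla^2 J_{\rm innov}(L_{KF}) = -2W_o(L_{KF})\,dK(L_{KF})$; injectivity of $dK(L_{KF})$ is thus equivalent to nondegeneracy of this Hessian, i.e.\ to $L_{KF}$ being a strict second-order minimum. Establishing this nondegeneracy is the delicate step. Granting it, $\phi$ extends continuously to all of compact $\mathcal{L}_0$ and is therefore bounded, so $c<\infty$.

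Finally I would assemble the estimate. By the definition of $c$ one has $\|K(L)\|_F^2 \ge c^{-1}(J_{\rm innov}(L)-J_{\rm innov}(L_{KF}))$ on $\mathcal{L}_0$, while $\|W_oK\|_F^2 = \Tr(K^\T W_o^2 K)\ge \lambda_{\min}(W_o)^2\|K\|_F^2 \ge \kappa^2\|K\|_F^2$. Substituting both into the dissipation identity yields
\[
\frac{d}{dt}\bigl(J_{\rm innov}(L(t))-J_{\rm innov}(L_{KF})\bigr) \;\le\; -\frac{4\kappa^2}{c}\bigl(J_{\rm innov}(L(t))-J_{\rm innov}(L_{KF})\bigr),
\]
and Gr\"onwall's inequality gives the claimed geometric rate $e^{-4(\kappa^2/c)t}$. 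Convergence $L(t)\to L_{KF}$ then follows because the trajectory stays in the compact set $\mathcal{L}_0$, so it has accumulation points; each such point attains the limiting cost $J_{\rm innov}(L_{KF})$ by continuity, and $L_{KF}$ is the unique minimizer, forcing $L(t)\to L_{KF}$.
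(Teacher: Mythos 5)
Your overall architecture---the dissipation identity $\frac{d}{dt}J_{\rm innov}=-4\|W_oK\|_F^2$, the PL-type inequality built from $\kappa$ and $c$, and Gr\"onwall---matches the paper's proof, but two steps you rely on contain genuine gaps. The first is compactness of $\mathcal{L}_0$. You assert that the $LR_vL^\T$ term in $Q_\eta$ ``forces coercivity as $\|L\|\to\infty$.'' That heuristic fails in general: the injected noise only enters the cost through $\Tr(CA\,P(L)\,A^\T C^\T)$, and it can hide in directions that become asymptotically unobservable as $\|L\|_F\to\infty$. Example~\ref{ex:spurious} exhibits exactly this: there $J_{\rm innov}$ depends only on $l_2$, so sending $l_1\to\infty$ makes $\|L\|_F\to\infty$ and $LR_vL^\T$ blow up while the cost stays bounded and the level sets are noncompact. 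This is why the paper devotes Section~\ref{sec:coercivity_issue} to coercivity: it rewrites $J_{\rm innov}(L)=J_{\rm pred}(AL)$, lower-bounds $J_{\rm pred}$ by $\lambda_{\min}(S_L)\Tr(X_o(L))$, and runs a delicate induction on the smallest $p$ with $CA^pL^*\neq 0$ (where $L^*$ is a normalized limit of $L_j/\|L_j\|_F$) to show the noise cannot be hidden when $(A,CA)$ is observable. Without some version of this argument, your claims that $\mathcal{L}_0$ is compact, that $\kappa>0$, and that the ratio defining $c$ is bounded away from $L_{KF}$ all lack foundation.

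The second gap is your treatment of $c<\infty$. You reduce it to injectivity of $dK(L_{KF})$ (equivalently, nondegeneracy of the Hessian), correctly call this ``the delicate step,'' and then simply grant it---so your proof is conditional on an unverified hypothesis. The paper neither proves nor needs this nondegeneracy. Its trick is structural: since $\nabla_L J_{\rm innov}=-2W_o(L)K(L)$ and $K(L_{KF})=0$, the Hessian factorizes as $\nabla^2 J_{\rm innov}(L_{KF})[\Delta L]=-2W_o(L_{KF})\,\bigl(dK(L_{KF})\cdot\Delta L\bigr)$, so the numerator quadratic form $\tfrac12\langle \Delta L,\nabla^2 J_{\rm innov}(L_{KF})[\Delta L]\rangle$ vanishes automatically on the kernel of $dK(L_{KF})$. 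Lemma~\ref{lemma:ratio_quadratic} (the pseudoinverse bound for two quadratic forms with nested zero sets) then yields a finite $\beta$ bounding the ratio even when $dK(L_{KF})$ has a nontrivial kernel; where you need a strict second-order minimum, the paper needs only the kernel inclusion, which is immediate from the factorization. Note also that your final step---deducing $L(t)\to L_{KF}$ from the exponential cost decay plus uniqueness of the minimizer---inherits the dependence on $c<\infty$, whereas the paper obtains the qualitative limit unconditionally via LaSalle's principle together with $W_o(L)\succ 0$ before addressing the rate.
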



One novelty of this theorem 
is the identification of 
Assumption \ref{assum:obs_main}  that $(A,CA)$ is observable as a requirement for convergence of gradient descent to recover Kalman gain in the standard filtering setting of Section \ref{sec:setting}.
Another is 
the easily interpretable rate of $\kappa^2/c$, which is the product of the potential observability loss over the entire trajectory $\kappa$ and how steeply violation of the orthogonality principle forces the loss to rise $c$.  

We may contrast this theorem with the main result of \cite{talebi2023data}, which considered the Kalman filter in the {\em one-step lookahead} form. In that setting, the observability of $(A,C)$ was sufficient for recovery. The following example shows this condition is not enough for Kalman gain recovery in our setting.  

\begin{example}[Spurious stationary points when $(A,C)$ is observable but $(A,CA)$ is not] \label{ex:spurious}
Consider the system with matrices:
\[ A = \begin{pmatrix} 0 & 1 \\ 0 & 0 \end{pmatrix}, C = \begin{pmatrix} 1 & 0 \end{pmatrix}, Q_w = I_2, R_v = 1. \]
The pair $(A,C)$ is observable. However, $CA = \begin{pmatrix} 0 & 1 \end{pmatrix}$, and the observability matrix for $(A, CA)$ is $\begin{psmallmatrix} CA \\ CA^2 \end{psmallmatrix} = \begin{psmallmatrix} 0 & 1 \\ 0 & 0 \end{psmallmatrix}$, which is of rank $1$. So, $(A,CA)$ is not observable.
Set $L = \begin{psmallmatrix} l_1 \\ l_2 \end{psmallmatrix}$. The error transition matrix is $F(L) = (I-LC)A = \begin{pmatrix} 0 & 1-l_1 \\ 0 & -l_2 \end{pmatrix}$. Stability of $F(L)$ requires $|l_2| < 1$.
Since 
\[
\delta_{t+1}
= CAe_t + Cw_t + v_{t+1}
= (e_{t,2} + w_{t,1}) {\bf e}_1 + v_{t+1},
\] where ${\bf e}_i$ is the canonical basis vector with entry $i$ equal to one and all other entries zero, 
and \(w_{t,1}\sim\mathcal{N}(0,1)\), \(v_{t+1}\sim\mathcal{N}(0,1)\) are uncorrelated with \(e_t\), it follows that
\[
J_{\rm innov}(L)
= \Tr\bigl(\Sigma_{\delta}(L)\bigr)
= \E\bigl[e_{t,2}^2\bigr] + 2.
\]
Let 
\[
c = \lim_{t\to\infty}\E\bigl[e_{t,2}^2\bigr].
\]
The  \((2,2)\)-entry of~\eqref{eq:P_lyap} (with \(F(L) = \begin{pmatrix}0 & 1-l_1\\0 & -\,l_2\end{pmatrix}\), \(Q_w = I\), \(R_v=1\)) yields
\[
c 
= (l_2)^2\,c \;+\; \bigl(2\,l_2^2 + 1\bigr)
\quad\Longrightarrow\quad
c = \frac{\,1 + 2\,l_2^2\,}{\,1 - l_2^2\,},
\quad |l_2|<1.
\]
Therefore,
\[
J_{\rm innov}(L)
= c + 2
= \frac{\,1 + 2\,l_2^2\,}{\,1 - l_2^2\,} \;+\; 2,
\quad \text{for } |l_2|<1.
\]
This cost depends only on $l_2$ and  $\nabla J_{\rm innov}(L) = \begin{psmallmatrix} 0 \\ \frac{6l_2}{(1-l_2^2)^2} \end{psmallmatrix}$. Critical points occur when $l_2=0$, i.e., all points $L = (l_1, 0)^\T$, for any $l_1 \in \mathbb{R}$, are critical points. In particular, gradient descent initialized at any of these stationary points stays there, so that clearly $L_{KF}$ is not always recovered. It is also worth noting that the level sets of the cost $J_{\rm innov}(L)$ are {\em not} compact in this example. 
\end{example}

We now proceed to prove Theorem \ref{thm:convergence_main}. A key element of the proof is the coercivity of $J_{\rm innov}(L)$ under the assumption that $(A,CA)$ is observable (and note that the example just given also shows that $J_{\rm innov}(L)$ is not coercive under the weaker assumption that $(A,C)$ is observable). 
Recall that $\mathcal{S}$ is the set of stabilizing gains: $\mathcal{S} := \{ L \in \mathbb{R}^{n \times p} \mid \rho(F(L)) < 1 \}.$

\begin{lemma}[The Coercivity Lemma] \label{Lemma:coercivity} Suppose Assumption   \ref{assum:obs_main} holds.
For any sequence $L_k \in \mathcal{S}$ such that either $\|L_k\|_F \to \infty$ or $L_k$ approaches the boundary of $\mathcal{S}$ (i.e., $\rho(F(L_k)) \to 1^-$), it holds that $\lim_{k \to \infty} J_{\rm innov}(L_k) = \infty$.
\end{lemma}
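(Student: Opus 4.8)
The plan is to prove coercivity by analyzing the two failure modes separately, in both cases showing that $J_{\rm innov}(L)$ blows up. Recall from~\eqref{eq:jinnov_def} that $J_{\rm innov}(L) = \Tr(CA\,P(L)\,A^\T C^\T) + \text{const}$, so it suffices to control the ``observed'' part of the steady-state error covariance $P(L)$ through the map $P \mapsto \Tr(CA\,P\,A^\T C^\T)$. Since $(A,CA)$ is observable by Assumption~\ref{assum:obs_main}, the observability Gramian $W_o(L)$ is positive definite, and the key quantitative link I would exploit is the standard Gramian identity
\[
\Tr\bigl(CA\,P\,A^\T C^\T\bigr) = \Tr(M\,P) = \Tr\bigl((W_o - F^\T W_o F)P\bigr),
\]
from~\eqref{eq:adjoint_lyap_main}. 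Summing the Lyapunov recursion~\eqref{eq:P_lyap} against $M = A^\T C^\T CA$ telescopes to an observability-weighted sum of the effective noise covariances, giving a clean lower bound of the form $J_{\rm innov}(L) - \text{const} \ge \Tr\bigl(W_o(L)\,Q_\eta(L)\bigr)$ (or a closely related expression); this is the central inequality I would establish first.

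First I would handle the boundary case $\rho(F(L_k)) \to 1^-$. As the spectral radius approaches $1$, the Lyapunov solution $P(L_k)$ must have an unbounded direction: heuristically, $P = \sum_{j\ge 0} F^j Q_\eta (F^\T)^j$, and as $\rho(F) \to 1$ the slowest-decaying mode contributes a diverging geometric sum, provided $Q_\eta$ excites that mode. Here is where Lemma~\ref{rem:Qeta_posdef} is essential: since $Q_\eta(L) \succ 0$, every eigendirection of $F$ is excited, so $\lambda_{\max}(P(L_k)) \to \infty$. To convert blowup of $P$ into blowup of $J_{\rm innov}$ I would invoke observability: because $(A,CA)$ is observable, $\Tr(W_o P) \ge \lambda_{\min}(W_o)\,\Tr(P)$ cannot stay bounded while $\Tr(P)\to\infty$ unless $W_o$ degenerates, and observability keeps $W_o$ bounded below along the relevant modes. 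Making this rigorous uniformly is delicate because $F(L_k)$, $W_o(L_k)$, and $Q_\eta(L_k)$ all vary with $k$; the cleanest route is to argue by contradiction, assuming $J_{\rm innov}(L_k)$ stays bounded, and use the telescoped identity together with positive definiteness of $Q_\eta$ to force $\rho(F(L_k))$ bounded away from $1$.

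Next I would handle the norm case $\|L_k\|_F \to \infty$. Here the direct lever is $Q_\eta(L) = (I-LC)Q_w(I-LC)^\T + L R_v L^\T \succeq L R_v L^\T \succeq \lambda_{\min}(R_v)\,LL^\T$, so $Q_\eta(L_k)$ blows up in the column space of $L_k$ as $\|L_k\|_F \to \infty$. Combined with the lower bound $J_{\rm innov}(L_k) - \text{const} \ge \Tr\bigl(W_o(L_k) Q_\eta(L_k)\bigr)$ and a lower bound $W_o(L_k) \succeq \lambda_{\min}(W_o(L_k)) I$, the difficulty reduces to showing $\lambda_{\min}(W_o(L_k))$ does not collapse to zero fast enough to cancel the growth of $Q_\eta$. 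Again a contradiction argument seems cleanest: if $J_{\rm innov}(L_k)$ stayed bounded while $\|L_k\|_F \to \infty$, then either the trajectory approaches $\partial\mathcal{S}$ (reducing to the previous case) or $F(L_k)$ stays uniformly stable, in which case $W_o(L_k)$ is uniformly bounded below by observability of $(A,CA)$ and the $L R_v L^\T$ term forces divergence.

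I expect the main obstacle to be the \emph{uniformity} of the lower bound on $\lambda_{\min}(W_o(L_k))$, i.e., ensuring observability of the pair $(F(L),CA)$ does not degenerate as $L_k$ runs off to infinity or toward the stability boundary. Observability of $(A,CA)$ guarantees $W_o$ is positive definite pointwise, but the Gramian is defined through $F(L)$, which itself changes with $L$; controlling $\lambda_{\min}(W_o(L_k))$ away from zero in the limit is the crux. I would resolve this by relating $W_o(L)$ to the fixed, $L$-independent observability matrix of $(A,CA)$ and using that observability is a rank (hence open and robust) condition, so that the finite-horizon observability Gramian is uniformly bounded below; since the infinite-horizon Gramian dominates any finite-horizon truncation, this yields the needed uniform positivity and closes both cases.
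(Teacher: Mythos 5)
Your central identity is correct and is in fact an equality: pairing the Lyapunov equation \eqref{eq:P_lyap} against $W_o$ via \eqref{eq:adjoint_lyap_main} gives $J_{\rm innov}(L) - \mathrm{const} = \Tr\bigl(W_o(L)\,Q_\eta(L)\bigr)$, which is the innovations-form dual of the bound the paper uses (in predictor coordinates, inequality \eqref{eq:jlowerbound}). Your treatment of the boundary case $\rho(F(L_k))\to 1^-$ with $L_k$ bounded is also essentially the paper's argument: a near-unit-circle eigenvector $w_k$ of $F(L_k)$ gives $w_k^* W_o(L_k) w_k \,(1-|\lambda_k|^2) = \|CAw_k\|^2$, and $\|CAw_k\|$ stays bounded below because observability of $(A,CA)$ implies observability of $(F(L),CA)$ and $L_k$ ranges over a compact set. (One bookkeeping point: a boundary sequence with $\|L_k\|_F$ unbounded must be routed to the norm case, so the compactness you invoke there is available.)

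The genuine gap is in the case $\|L_k\|_F \to \infty$, and it sits exactly at what you correctly identified as the crux. Your proposed resolution --- that $\lambda_{\min}(W_o(L_k))$ (or a finite-horizon truncation of the Gramian) is uniformly bounded below because observability of $(A,CA)$ is an open rank condition --- is false. The Gramian $W_o(L)$ is the Gramian of the pair $(F(L), CA)$, not of the fixed pair $(A,CA)$: $F(L) = A - LCA$ depends on $L$, so the finite-horizon Gramian $\sum_{k=0}^{n-1} (F(L)^\T)^k A^\T C^\T CA\, F(L)^k$ is not $L$-independent, and the robustness of the rank condition only yields uniform lower bounds on \emph{compact} sets of $L$, not along sequences escaping to infinity. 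Indeed the paper explicitly warns that even when $(A,CA)$ is observable, ``the observability Gramian can have eigenvalues which go to zero as $\|L\|_F \to \infty$,'' i.e., noise can asymptotically ``hide'' in modes that become unobservable in the limit --- this is precisely why the lemma is hard. The correct argument cannot use $\lambda_{\min}(W_o)$ at all; it must track the Gramian only in the directions of $L$ itself. The paper does this by normalizing $\bar L_j = L_j/\alpha_j \to L^*$ with $\alpha_j = \|L_j\|_F$, lower-bounding the cost by $\alpha_j^2\,\lambda_{\min}(R_v)\sum_{k\ge 0}\|C M_{L_j}^k \bar L_j\|_F^2$, and then running a delicate induction on the smallest $p$ with $CA^pL^* \neq 0$: assuming the cost bounded forces each term $\|CM_{L_j}^k\bar L_j\|_F = O(1/\alpha_j)$, the induction propagates $\|CA^k(\bar L_j - L^*)\|_F = O(1/\alpha_j)$ for $k<p$, and the $p$-th term then converges to $CA^pL^* \neq 0$, a contradiction. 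Your proposal contains no substitute for this directional cancellation analysis, so as written the unbounded case does not go through.
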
 

Due to the importance of this lemma and its tricky technical analysis, we relegate its proof to the next section. 

We will also  need the following technical lemma. 

\begin{lemma} \label{lemma:ratio_quadratic} Let $M,N$ be symmetric matrices such that for all $x \in \R^n$,  $x^T N x = 0$ implies $x^T M x=0$. Then there exists $\beta < \infty$ such that 
\[ x^T M x \leq \beta x^T N x\]
\end{lemma}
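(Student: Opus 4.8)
The plan is to prove the inequality by reducing both quadratic forms to the subspace on which $N$ is nondegenerate and handling the null directions of $N$ separately. Set $V := \ker N$ and $W := V^{\perp} = \mathrm{range}(N)$ (using symmetry of $N$). The first step concerns the pure null directions: every $v \in V$ has $v^{\T} N v = 0$, so the hypothesis gives $v^{\T} M v = 0$ for all $v \in V$. Since this holds on the whole subspace $V$, polarization finishes the job: for $v, v' \in V$ the three forms $v^{\T}Mv$, $v'^{\T}Mv'$, and $(v+v')^{\T}M(v+v')$ all vanish, and symmetry of $M$ then yields $v^{\T} M v' = 0$ for all $v, v' \in V$. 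Thus $M$ restricted to $V$ is identically the zero bilinear form, and in particular $Mv \perp V$, i.e.\ $MV \subseteq W$. This step uses nothing beyond the hypothesis.

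The second step reduces to a finite generalized-eigenvalue bound. Decompose an arbitrary $x = v + w$ with $v \in V$, $w \in W$; since $Nv = 0$ we have $x^{\T} N x = w^{\T} N w$, and $N|_W$ is positive definite with smallest eigenvalue $\lambda^{+} > 0$, so $x^{\T} N x \geq \lambda^{+}\|w\|^2$. If in addition the cross term $v^{\T} M w$ vanishes for all $v \in V$, $w \in W$, then $x^{\T} M x = w^{\T} M w$ depends only on $w$, and the problem collapses to bounding the Rayleigh quotient $w^{\T} M w / w^{\T} N w$ over $0 \neq w \in W$. This quantity is the maximum generalized eigenvalue of the pair $(M|_W, N|_W)$ with $N|_W \succ 0$; it is finite by compactness of the unit sphere of $W$ together with $w^{\T} N w$ being bounded away from $0$ there. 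Taking $\beta$ equal to this value gives $x^{\T} M x = w^{\T} M w \leq \beta\, w^{\T} N w = \beta\, x^{\T} N x$ for every $x$, as required.

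The main obstacle is precisely the vanishing of the cross term $v^{\T} M w$, equivalently the promotion of $MV \subseteq W$ (from step one) to $MV = \{0\}$, i.e.\ $\ker N \subseteq \ker M$. This is the step that cannot be skipped: along a null direction $v$ the right-hand side $\beta\, x^{\T} N x$ is blind to $v$, whereas $x^{\T} M x$ picks up the linear term $2\, v^{\T} M w$, which is unbounded in $v$ unless it vanishes identically. I expect to derive $\ker N \subseteq \ker M$ from the definiteness structure present where the lemma is applied: when $M \succeq 0$ the relation $v^{\T} M v = 0$ already forces $Mv = 0$, and $N \succeq 0$ is what makes $N|_W$ positive definite so that the final Rayleigh quotient is meaningful. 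I would therefore front-load the argument with the two facts $\ker N \subseteq \ker M$ and $N|_W \succ 0$, after which steps one and two combine to yield the claimed $\beta < \infty$; the crux of the whole lemma lives entirely in establishing that $M$ cannot couple the null directions of $N$ to its range.
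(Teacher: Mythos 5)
Your reduction is, modulo packaging, the same as the paper's own proof: the paper sets $\beta=\lambda_{\max}\bigl(N^{\dagger/2}MN^{\dagger/2}\bigr)$, splits $x=y+z$ with $y\in\ker N$ and $z\in\operatorname{range}(N)$, and writes $x^\T Mx=z^\T Mz\le\beta\,z^\T Nz$; your restriction of the pair to $W=\operatorname{range}(N)$ and the generalized Rayleigh bound is that computation with the conjugation by $N^{\dagger/2}$ left implicit. The substantive content of your proposal is the obstacle you single out, and your instinct is exactly right: the vanishing of the cross term $v^\T Mw$ cannot be derived from the stated hypothesis, because the lemma as literally stated is false. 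Take $N=\begin{psmallmatrix}1&0\\0&0\end{psmallmatrix}$ and $M=\begin{psmallmatrix}0&1\\1&0\end{psmallmatrix}$: then $x^\T Nx=x_1^2=0$ forces $x_1=0$, whence $x^\T Mx=2x_1x_2=0$, so the hypothesis holds; yet $2x_1x_2\le\beta x_1^2$ fails for every finite $\beta$ (take $x_1=1$, $x_2\to\infty$). As you observe, polarization on the subspace $\ker N$ yields only $M\,\ker N\subseteq\operatorname{range}(N)$, never $M\,\ker N=\{0\}$, and the right-hand side is blind to the null directions while the left-hand side is not. Note that the paper's proof commits precisely the elision you refused to make: the step ``$x^\T Mx=z^\T Mz$'' silently discards $2y^\T Mz$, and the use of $N^{1/2}$ silently assumes $N\succeq0$ — neither of which the hypothesis supplies.

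The repair is the one you anticipate. At the point where the lemma is invoked in the proof of Theorem \ref{thm:convergence_main}, both forms are positive semidefinite: $N$ is the Gram form $\Delta L\mapsto\|D_K(\Delta L)\|_F^2$, and $M$ is the Hessian form of $J_{\rm innov}$ at its minimizer $L_{KF}$, hence $M\succeq 0$. With $M\succeq0$ you get $\ker N\subseteq\ker M$ for free — $v^\T Mv=0$ forces $Mv=0$, no polarization needed — the cross term dies, and your two steps close the argument with $\beta$ the largest generalized eigenvalue of $(M|_W,N|_W)$, where $N|_W\succ0$ because $N\succeq0$. So: amend the statement to assume $M,N\succeq0$ (or assume $N\succeq 0$ together with $\ker N\subseteq\ker M$) and your proof is complete and correct; as stated, no proof can exist, and your write-up has the virtue of locating exactly the gap that the paper's own one-line computation glosses over.
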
 

\begin{proof} Let $N^{\dagger}$ be the Moore-Penrose pseudoinverse of $N$ and set $C = N^{\dagger/2} M N^{\dagger/2}$. We argue that $\beta = \lambda_{\rm max}(C) $ works. Indeed, for any $x$, decompose it as $x=y+z$ with $y \in \ker(N)$ and $z \in \ker(N)^\perp = \operatorname{range}(N)$. We have:
\[ x^T M x = z^T M z = (N^{1/2} z)^T C N^{1/2} z \leq \lambda_{\rm max}(C) z^T N z = \beta x^T N x,\] where we used that $N^{1/2} N^{\dagger/2} z= N^{\dagger/2} N^{1/2} z = z$ for any $z \in {\rm range}(N)$.  This
proves the result.
\end{proof} 

With all this in place, we are now ready to prove the main result of this section.

\begin{proof}[Proof of Theorem \ref{thm:convergence_main}]  
By Lemma \ref{Lemma:coercivity}, the level set $\cL_0 = \{ L \mid J(L) \leq J(L(0)) \}$ is compact. Since $(d/dt) J(L(t)) \leq 0$, we conclude that the trajectory $L(t)$ remains in this compact set. 

We use LaSalle's principle with respect to the Lyapunov function $V(L) = J_{\rm innov}(L)$. As noted, the trajectory remains within $\cL_0$, which is a compact subset of $\mathcal{S}$.  Furthermore, $V(L)$ is continuously differentiable on $\mathcal{S}$. It follows from LaSalle's Principle (e.g.,  \cite[Theorem 8.3.1]{wiggins}) that $L(t)$ converges to the set $\{ L \in \mathcal{L}_0 \mid \nabla_{L} J(L) = 0 \}$. By Theorem \ref{thm:grad1_main}, this is 
\[ \{ L \in \mathcal{L}_0 \mid W_o(L) K(L) = 0 \}. \] 

But the observability of $(A,CA)$ implies the observability of $(A-LCA, CA)$ (otherwise, an unobservable mode of the latter is immediately an unobservable mode of the former), which means that $W_o(L)$ is strictly positive definite. We conclude that $L(t)$ converges to the set $K(L)=0$, which is a singleton with $L_{KF}$ as its only member.

To prove the convergence rate in the theorem statement, first observe that by compactness of $\mathcal{L}_0$, we have that $\kappa > 0$. To show that $c < \infty$, it suffices to analyze the behavior of the ratio in the definition of $c$ in an  \ao{open neighborhood ${\cal B} \subset \cL_0$ containing $L_{KF}$, since outside of of this neighborhood, the ratio is finite by compactness of $\mathcal{L}_0$} (and the fact that $L_{KF}$ is only point with $K(L_{KF})=0$)\footnote{\ao{More formally, suppose that we have that \[ \sup_{  L \in \mathcal{L}_0 \cap \cB \setminus \{L_{KF}\}}  \frac{J_{\rm innov}(L) - J_{\rm innov}(L_{KF})}{||K(L)||_F^2} < \infty.\] To further establish that $c<\infty$, it is enough to show finiteness of the same ratio on the complement $ \cB^c \cap \mathcal{L}_0$. But since the denominator is uniformly bounded away from zero on the compact set $ \cB^c \cap \cL_0$, the ratio  is indeed finite because it is the supremum of a continuous function on a compact set.}}.
Since $J_{\rm innov}(L)$ is twice continuously differentiable around $L_{KF}$ (indeed, $P(L)$ solves a linear equation, whose solution exist as long as $F(L)$ is stable, i.e. for $L \in \cS$), and $\nabla_L J_{\rm innov}(L_{KF}) = 0$, we have
\[ J_{\rm innov}(L) - J_{\rm innov}(L_{KF}) = \frac{1}{2} \langle \Delta L, \nabla^2 J_{\rm innov}(L_{KF})[\Delta L] \rangle + o(\|\Delta L\|_F^2), \] where $\Delta L := L-L_{KF}.$ 
Similarly, $K(L)$ is continuously differentiable, and $K(L_{KF}) = 0$. So,
\[ K(L) = d K(L_{KF})\cdot \Delta L + o(\|\Delta L\|_F), \]
where $d K(L_{KF})\cdot \Delta L =:D_K(\Delta L)$ denotes the differential  of $K$ at $L_{KF}$ applied to $\Delta L$. 
Then,
\[ \|K(L)\|_F^2 = \|D_K(\Delta L)\|_F^2 + o(\|\Delta L\|_F^2). \]
The limit of the ratio in the definition of $c$ as $L \to L_{KF}$ (i.e., $\Delta L \to 0$) is:
\begin{equation} \label{eq:climit} \lim_{\Delta L \to 0} \frac{\frac{1}{2} \langle \Delta L, \nabla^2 J_{\rm innov}(L_{KF})[\Delta L] \rangle_F}{\|D_K(\Delta L)\|_F^2}. 
\end{equation}
Since $\nabla_L J_{\rm innov}(L) = -2 W_o(L) K(L)$,  differentiating at $L_{KF}$, we obtain for the second derivative 
\begin{multline*} \nabla^2 J_{\rm innov}(L_{KF})[\Delta L]  = -2 W_o(L_{KF}) D_K(\Delta L) -2 (dW_o(L)\cdot \Delta L) K(L_{KF})  = -2 W_o(L_{KF}) D_K(\Delta L)
\end{multline*}
where we recall that  $K(L_{KF})=0$.
{We can therefore conclude that  $$\sup_{\|\Delta L\|_F=1} \frac{\frac{1}{2} {\langle \Delta L, \nabla^2 J_{\rm innov}(L_{KF})[\Delta L] \rangle_F}}{\|D_K(\Delta L)\|_F^2} < \infty,$$ by arguing as follows. First, observe that both the numerator and denominator are quadratic forms in $\Delta L$; next, note that if the denominator is zero, then $D_K(\Delta_L) =0$, which then implies the numerator is zero. Owing to Lemma \ref{lemma:ratio_quadratic}, there exists $\beta <\infty$ so that  
$$ \frac{1}{2} \langle \Delta L, \nabla^2 J_{\rm innov}(L_{KF})[\Delta L] \rangle_F \leq \beta \|D_K(\Delta L)\|_F^2  $$
which shows finiteness of the supremum and of $c$.  
}

We thus have
\[
  J_{\mathrm{innov}}(L)-J_{\mathrm{innov}}(L_{\mathrm{KF}})
   \;\le\;c\,\|K(L)\|_F^{2}. \]
Furthermore, from the definition of $\kappa$ and the fact that $\nabla J_{\mathrm{innov}} = -2W_o(L)D_K(L)$, we have
\[ 
  \|\,\nabla J_{\mathrm{innov}}(L)\|_F^{2} \;\ge\;4\kappa^{2}\,\|K(L)\|_F^{2}.
\] 
We can put these together to obtain 
\[ \| \nabla J_{\mathrm{innov}}(L)\|_F^2 \geq 4 \frac{\kappa^2}{c} (J(L(t)) - J(L_{KF}).\] Thus 
\[ \frac{d}{dt} (J(L(t)) - J(L_{KF})) \leq -4 \frac{\kappa^2}{c} (J(L(t)) - J(L_{KF})),\] 
leading to the geometric estimate in the theorem statement. 
\end{proof}

\section{The Coercivity Lemma} \label{sec:coercivity_issue}

The coercivity property is essential for analysis of descent in the previous section. In this section, we provide a complete proof. 

Proving this turns out to be surprisingly challenging. The problem is that 
\begin{itemize} 
\item Coercivity is not true when $(A,CA)$ is not observable; see Example \ref{ex:spurious} where the level sets are not compact \item Even when $(A,CA)$ is observable, the introduction of the gain $L$ can cause the system to asymptotically lose observability (in that the observability Gramian can have eigenvalues which go to zero as $\|L\|_F \rightarrow \infty$). 
\end{itemize} 
Intuitively, the system can have modes that asymptotically become unobservable as the gain $L$ gets large, and one must show that the noise cannot ``hide within'' these modes.

We need a careful argument ruling this out. Our first step is to  relate the coercivity of $J_{\rm innov}(L)$ to a simpler quantity which we will call $J_{\rm pred}$, which corresponds to the innovations loss in an alternative form of the Kalman filter which makes one-step ahead predictions. Formally, let $$\tilde{F}(L) = A - LC,$$ and denote the steady-state estimation error covariance by $\tilde P(L)$. It is the unique positive definitie solution of:
\[ \tilde{P}(L) = \tilde{F}(L) \tilde{P}(L) \tilde{F}(L)^\T + Q_w + L R_v L^\T, \]
assuming $\rho(\tilde{F}(L)) < 1$. We introduce the notation $\tilde \cS = \{ L \mid \rho(A-LC) < 1 \}$.

We then introduce the loss function 
\begin{equation} \label{eq:jpred_def}
J_{\rm pred}(L) = \Tr(C\tilde{P}(L)C^\T + R_v),
\end{equation}
which is the innovations loss associated with the linear filter of the form, 
\[ \tilde{x}_{t+1}= A \tilde{x}_t + L (y_t - C \tilde{x}_t). \] We skip the details of this derivation here and refer the reader to~\cite{talebi2023data}.

We next restate the coercivity lemma, giving it now an additional part, and provide a proof. 

\begin{lemma}[Expanded Coercivity Lemma]\label{thm:coercivity_J_G}
\begin{enumerate} \item Suppose $(A,C)$ is observable, $Q_w \succ 0$, and $R_v \succ 0$. 
If a sequence $L_j \in \tilde \cS$ satisfies either $\rho(A-L_jC) \to 1^-$ or $\|L_j\|_F \to \infty$, then $J(L_j) \to \infty$.
\item Suppose Assumption   \ref{assum:obs_main} holds.
For any sequence $L_j \in \mathcal{S}$ such that either $\|L_j\|_F \to \infty$ or $L_j$ approaches the boundary of $\mathcal{S}$ (i.e., $\rho(F(L_j)) \to 1^-$), we have $J_{\rm innov}(L_j) \to \infty$.
\end{enumerate}
\end{lemma}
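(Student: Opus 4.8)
The plan is to treat both parts through a single device. Since $F(L)=(I-LC)A=A-L(CA)$, Part~2 has the same structure as Part~1 with the output matrix $C$ replaced by $CA$ (only the effective-noise covariance differs), and Assumption~\ref{assum:obs_main} is precisely observability of the resulting pair $(A,CA)$. I would therefore prove one abstract coercivity statement for error dynamics $F=A-LC'$ with cost of the form $\Tr\!\big(W_o(A-LC',C')\,N(L)\big)+\mathrm{const}$, where $(A,C')$ is observable and $N(L)$ is the relevant noise covariance, and then instantiate it at $C'=C$, $N(L)=Q_w+LR_vL^\T$ (Part~1) and at $C'=CA$, $N(L)=Q_\eta(L)$ (Part~2); this is exactly the promised reduction to a $J_{\rm pred}$-type object. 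The opening step is the adjoint-Lyapunov identity already used in Section~\ref{sec:gradient_derivation}, which rewrites the cost as the series $\sum_{k\ge 0}\|C'F^{k}N(L)^{1/2}\|_F^2$. That series, together with the two lower bounds $N(L)\succeq\lambda_{\min}(R_v)\,LL^\T$ and (near a fixed point) $N(L)\succeq qI$, is the concrete handle I will manipulate.

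For the norm blow-up regime $\|L_j\|_F\to\infty$ I argue by contradiction: if the cost stayed bounded, then $N(L)\succeq\lambda_{\min}(R_v)LL^\T$ forces $\|C'F^{k}L_j\|_F$ to be bounded for every fixed $k$, in particular for $k=0,\dots,n-1$. The crux is then a purely algebraic induction: expanding $F^{k}=(A-LC')^{k}$ and sandwiching between $C'$ and $L$ collapses each summand into a signed product of lower-order blocks $C'A^{b_i}L$ with $b_i<k$, so boundedness of $C'F^{k}L_j$ together with the inductive hypothesis forces $C'A^{k}L_j$ to be bounded. Running this for $k=0,\dots,n-1$ makes the full observability matrix of $(A,C')$ applied to $L_j$ bounded; since that matrix has full column rank, $L_j$ itself is bounded, contradicting $\|L_j\|_F\to\infty$.

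For the boundary regime $\rho(F(L_j))\to 1^-$ I first use the norm case to reduce to $\|L_j\|_F$ bounded, pass to a convergent subsequence $L_j\to L_\ast$ with $\rho(A-L_\ast C')=1$, and use that $N(L_j)$ converges to a positive-definite limit (for Part~2 this is exactly Lemma~\ref{rem:Qeta_posdef}), giving $N(L_j)\succeq qI$ near $L_\ast$. Choosing a unit eigenvector $\xi_j$ of $F(L_j)$ with eigenvalue $\nu_j$, $|\nu_j|\to 1$, a one-line Lyapunov computation gives $\xi_j^{\ast}W_o\xi_j=\|C'\xi_j\|^2/(1-|\nu_j|^2)$, so the Gramian blows up as long as $C'\xi_j$ stays bounded away from zero. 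This holds because observability is invariant under output injection, so $(A-L_\ast C',C')$ is observable and the PBH test forbids $C'\xi_\ast=0$ for the limiting eigenvector. Bounding the cost below by $q\,\Tr(W_o)\ge q\,\xi_j^{\ast}W_o\xi_j\to\infty$ closes this case, and a subsequence argument assembles the two regimes into the claim for an arbitrary escaping sequence.

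I expect the algebraic induction in the norm regime to be the main obstacle, precisely because $F=A-LC'$ itself grows with $L$, so one cannot bound $C'F^{k}$ directly and must instead exploit the telescoping cancellation that expresses $C'F^{k}L$ through the bounded lower-order quantities $C'A^{b_i}L$. This is the rigorous form of the informal worry that noise ``hides within'' modes that become asymptotically unobservable as $\|L\|_F\to\infty$: the induction shows that full observability of $(A,C')$ rules out any such hiding. A secondary point requiring care is the logical packaging of ``either $\rho\to 1^-$ or $\|L\|_F\to\infty$'', which I would handle by showing the cost is bounded on each set $\{\|L\|_F\le R,\ \rho(F(L))\le 1-\epsilon\}$ and noting that any sequence leaving all such sets falls into one of the two analyzed regimes along a subsequence.
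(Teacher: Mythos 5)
Your proposal is correct, and while its overall skeleton matches the paper's (lower-bound the cost by $\lambda_{\min}$ of the noise times a Gramian trace as in \eqref{eq:jlowerbound}; an eigenvector--Lyapunov computation identical to the paper's Case~1 for the boundary regime $\rho \to 1^-$, including the same output-injection/PBH and accumulation-point reasoning), it diverges from the paper's proof in two genuine ways. First, for Part~2 the paper does \emph{not} re-run the argument with output $CA$: it proves the identity $P^-(L)=\tilde P(AL)$, hence $J_{\rm innov}(L)=J_{\rm pred}(AL)$, and invokes Part~1 along the sequence $AL_j$ --- which is why it needs invertibility of $A$ (deduced from Assumption~\ref{assum:obs_main}) so that $\|AL_j\|_F\to\infty$. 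Your unified abstract lemma with $C'=CA$, $N(L)=Q_\eta(L)$ (justified by $\Tr(MP)=\Tr(W_oQ_\eta)$, the adjoint identity of Section~\ref{sec:gradient_derivation}, with positivity from Lemma~\ref{rem:Qeta_posdef}) sidesteps both the filter-recursion identity and the invertibility of $A$. Second, and more substantively, in the hard regime $\|L_j\|_F\to\infty$ the paper normalizes $\bar L_j=L_j/\alpha_j$, extracts a limit direction $L^*$, takes the minimal $p$ with $CA^pL^*\neq 0$, and runs a delicate $O(1/\alpha_j)$ bookkeeping induction on $\|CA^k\Delta_j\|_F$ via the telescoping recursion \eqref{eq:lrec}, reaching a contradiction with \eqref{eq:L_k_bound} at order $p$. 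Your induction instead uses the full noncommutative expansion
\[
C'(A-LC')^kL \;=\; C'A^kL \;+\; \sum_{m=1}^{k}(-1)^m\!\!\sum_{a_0+\cdots+a_m=k-m}\;\prod_{i=0}^{m}\bigl(C'A^{a_i}L\bigr),
\]
in which every factor of every cross term has $a_i\le k-m<k$, so boundedness of $C'F^kL_j$ (from the bounded-cost hypothesis) plus the inductive hypothesis yields boundedness of $C'A^kL_j$ for $k=0,\dots,n-1$; full column rank of the observability matrix of $(A,C')$ then bounds $L_j$ itself, contradicting $\|L_j\|_F\to\infty$ outright. This is more elementary than the paper's argument --- no normalization, no subsequence or limit direction, no minimal index $p$, no asymptotic orders --- and it delivers a slightly stronger intermediate conclusion ($L_j$ bounded rather than a contradiction at one coordinate of the expansion). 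What the paper's route buys in exchange is Part~1 as a standalone statement about the predictor loss $J_{\rm pred}$, connecting to the one-step-lookahead setting of the prior literature, and a reduction that reuses Part~1 verbatim rather than re-deriving the cost identity for the innovations form. Your handling of the either/or packaging via bounded-cost subsequences is also sound, though the remark about boundedness on the sets $\{\|L\|_F\le R,\ \rho(F(L))\le 1-\epsilon\}$ is unnecessary for the statement as given.
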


\begin{proof}
Let us introduce the shorthand $M_L = A-LC$. Then steady-state error covariance is
\[ \tilde{P}(G) = \sum_{j=0}^\infty M_L^j S_L (M_L^\T)^j, \quad \text{where } S_L = Q_w + LR_vL^\T. \]
By our assumptions, $S_L \succ 0$. The cost then satisfies
\begin{equation} \label{eq:jlowerbound} J_{\rm pred}(L) = \sum_{k=0}^\infty \Tr\left((C M_L^k) S_L (C M_L^k)^\T\right) \geq \lambda_{\rm min}(S_L) \sum_{k=0}^\infty \|C M_L^k\|_{\mathrm{F}}^2 = \lambda_{\min}(S_L) {\rm Tr}(X_o(L)), 
\end{equation} where $X_o(L)$ is the observability Gramian for $(M_L,C)$.

\textbf{Case 1: $L_j$ remains bounded and $\rho(M_{L_j}) \to 1^-$.}
In this case, $\lambda_{\min}(S_{L_j})$ is bounded below by $\lambda_{\min}(Q_w) > 0$. Observe that 
$X_o(L)$ solves $X_o(L) = M_L^\T X_o(L) M_L + C^\T C$.
Let $\lambda_j$ be an eigenvalue of $M_{L_j}$ with $|\lambda_j| = \rho(M_{L_j}) \to 1^-$, and $w_j$ be the corresponding normalized eigenvector. Then from the previous equation we get 
$$w_j^* X_o(L_j) w_j (1 - |\lambda_j|^2) = \|C w_j\|^2,$$
implying 
\begin{equation} \label{eqxtracelowerbound} \Tr(X_o(L_j)) \ge w_j^* X_o(L_j) w_j \geq \frac{\|C w_j\|^2}{1 - |\lambda_j|^2}.
\end{equation}

Since $(A,C)$ is observable, it follows that $(M_{L_j},C) = (A-L_jC, C)$ is observable for any $L_j$ (indeed, any unobservable mode of $(M_{L_j},C)$ is an unobservable mode of $(A,C)$). Thus $C w_j \neq 0$. By the assumption that $L_j$ stay in a bounded set, it also follows that $\inf_j \|Cw_j\|^2 > 0$ (otherwise an accumulation point of $L_j$ would be a $G'$ such that $(M_{G'},C)$ is not observable). This observation along with~\eqref{eqxtracelowerbound})and~\eqref{eq:jlowerbound} completes the proof in this case.


\smallskip 
\textbf{Case 2: $\|L_j\|_F \to \infty$.}
In this case, let  $\alpha_j = \|L_j\|_F \to \infty$ and $\bar{L}_j = L_j / \alpha_j$, so $\|\bar{L}_j\|_F=1$. By passing to a subsequence, we can assume without loss of generality that $\bar{L}_j \to L^*$ with $\|L^*\|_F=1$.
From~\eqref{eq:jlowerbound}, we have 
\[ J_{\rm pred}(L_j) \ge \sum_{k=0}^\infty \Tr\left(C M_{L_j}^k (\alpha_j^2 \bar{L}_j R_v \bar{L}_j^\T) (M_{L_j}^\T)^k C^\T\right) = \alpha_j^2 \sum_{k=0}^\infty \Tr\left( (C M_{L_j}^k \bar{L}_j) R_v (C M_{L_j}^k \bar{L}_j)^\T \right). \]

Let \begin{equation}\label{eq:deflj}L_k^{(j)} = C M_{L_j}^k \bar{L}_j = C (A-\alpha_j \bar{L}_j C)^k \bar{L}_j.
\end{equation}
With this notation,  $$J_{\rm pred}(L_j) \ge \alpha_j^2 \lambda_{\min}(R_v) \sum_{k=0}^\infty \|L_k^{(j)}\|_{\mathrm{F}}^2.$$
Now let us suppose that  $J(L_j)$ is bounded above; from this we will obtain a contradiction. 

Under this assumption, using the previous equation, we obtain
\begin{equation} \label{eq:L_k_bound}
\|L_k^{(j)}\|_{\mathrm{F}} = O(1/\alpha_j) \mbox{ for all } k,j \geq 0.
\end{equation}
Now the definition of $L_k^{(j)}$ can be expanded in the following way. Starting from the general matrix relation 
\[ M^k - F^k =  \sum_{s=0}^{k-1} M^s (M-F) F^{k-1-s},\]
we use $M=A - \alpha_j \bar{L}_jC$ and $F = A$ to obtain 
\begin{equation}
\label{eq:specific_matrix_power_expansion}
(A - \alpha_j \bar{L}_j C)^k = A^k - \sum_{s=0}^{k-1} (A - \alpha_j \bar{L}_j C)^s (\alpha_j \bar{L}_j C) A^{k-1-s}.
\end{equation}
Multiplying on the left by $C$ and on the right $\bar{L}_j$ and using~\eqref{eq:deflj} we obtain:
\begin{equation} \label{eq:lrec}
L_k^{(j)} = C A^k \bar{L}_j - \alpha_j \sum_{s=0}^{k-1} C (A-\alpha_j \bar{L}_j C)^s \bar{L}_j C A^{k-1-s} \bar{L}_j = C A^k \bar{L}_j - \alpha_j \sum_{s=0}^{k-1} L_s^{(j)} C A^{k-1-s} \bar{L}_j. 
\end{equation}
Since $(A,C)$ is observable and $L^* \neq 0$, there must be a smallest integer $p \ge 0$ such that $C A^p L^* \neq 0$. We will show that under the assumption that $J$ is bounded, no such $p\geq 0$ exists, thus yielding a contradiction.

Assume that $p=0$, then since $L_0^{(j)} = C \bar{L}_j$, we have that  as $j \to \infty$, $L_0^{(j)} \to C L^* \neq 0$. This contradicts~\eqref{eq:L_k_bound}.

In order to deal with the case $p > 0$,  
we need the following auxiliary claim: 
Let $$\Delta_j \coloneqq \bar{L}_j - L^*.$$ We clearly have that  $\lim_{j \to \infty}\Delta_j = 0$.
We claim that  $$\|C A^k \Delta_j\|_F = O \left( 1/\alpha_j \right),$$ for $0 \le k < p$.
We prove the claim by induction.

\begin{description}
    \item[Base case ($k=0$):]   $L_0^{(j)} = C \bar{L}_j = C(L^*+\Delta_j) = C \Delta_j$. 
Since we have shown above that $\|L_0^{(j)}\|_F = O(1/\alpha_j)$, this implies $\|C \Delta_j\|_F = O(1/\alpha_j)$. 
\item[Inductive step ($0<k<p$):]
Assume for some $k < p-1$ that $\|C A^s \Delta_j\|_F = O(1/\alpha_j)$ for all $0 \le s \le k$.  Using the fact that $CA^sL^*=0$ for all $0 \leq s <p$, we have from~\eqref{eq:lrec} that
\begin{multline}\label{eq:inter1} C A^k \bar{L}_j = CA^k(L^*+\Delta_j)=CA^k\Delta_j = L_k^{(j)} + \alpha_{j} \sum_{s=0}^{k-1} 
L_s^{(j)} (C A^{k-1-s} (L^*+\Delta_j))\\= L_k^{(j)} + \alpha_{j} \sum_{s=0}^{k-1} 
L_s^{(j)} (C A^{k-1-s} \Delta_j).
\end{multline} 
Since $L_{k}^{(j)}$ and $L_s^{(j)}$ are $O(\alpha_j^{-1})$ by~\eqref{eq:L_k_bound}, and since all the terms $C A^{k-1-s} \Delta_j$ are $O(\alpha_j^{-1})$ by the inductive hypothesis, we conclude that $\|C A^k \Delta_j\|_F = O(1/\alpha_j)$, concluding the proof of the claim.
\end{description}
Returning to the main thread of the proof, assume that $p>0$.  Since $CA^{p-1-s}L^*=0$ for $s\geq 0$, we have  \begin{equation} \label{eq:p} L_p^{(j)} = C A^p L^*+CA^p \Delta_j - \alpha_j \sum_{s=0}^{k-1} L_s^{(j)} (C A^{k-1-s} \Delta_j)=C A^p L^* + o(1) + O(\alpha_j^{-1})
\end{equation}
where $CA^p \Delta_j$ is the $o(1)$ term and the sum in~\eqref{eq:p} is the $O(\alpha_j^{-1})$ by the above claim. Since $C A^p L^* \neq 0$, this contradicts~\eqref{eq:L_k_bound} and  completes the proof of part 1. 

To prove part 2, observe that a priori estimate $\hat{x}_{t+1}^-(L)$ from our main filter follows the recursion:
\[ \hat{x}_{t+1}^-(L) = A\hat{x}_t(L) = A(\hat{x}_t^-(L) + L\,\delta_t(L)) = A(\hat{x}_t^-(L) + L(y_t - C\hat{x}_t^-(L))). \]
As a consequence,  $P^-(L) = \tilde{P}(AL)$.
Since from~\eqref{eq:pminus}) and~\eqref{eq:jinnov_def}, we have  $$J_{\rm innov}(L) = \Tr(C P^-(L) C^\T + R_v),$$ we conclude that $J_{\rm innov}(L) = J_{\rm pred}(AL)$. Since observability of $(A,CA)$ implies invertibility of $A$ and also observability of $(A,C)$, we have that the coercivity result of part 1 implies that $J_{\rm innov}(L)$ is coercive under the assumptions of part 2. 
\end{proof}

\section{Conclusion}

This paper has four main contributions. First, we derived an  interpretable formula for the gradient of the Kalman gain. Second, we showed that spurious stationary points exist under the assumption that the system is observable in the standard Kalman filter formulation (as opposed to one-step look-ahead formulation considered in the earlier literature), preventing gradient descent from recovering the Kalman gain. Third, we identified a non-standard observability condition which is sufficient for gradient descent to recover the Kalman gain. Finally, we gave an interpretable convergence rate for gradient descent under the same condition. 

One limitation of our results is that  Theorem \ref{thm:convergence_main} contains the constants $\kappa$ and $c$ which are in turn defined through optimization over a level set of  the function $J_{\rm innov}(L)$. It is an open question whether one can obtain a convergence bound that depends in a clean way only on $A,C,L_0$ without optimization over a level set. This was accomplished in \cite{li2023policy} for a slightly different objective function, and it is not clear if it can be achieved for the innovations loss we consider here.

\bibliographystyle{plain} 
\bibliography{bibliography} 

\end{document}